\newtheorem{thm}{Theorem}
\newtheorem*{thm1}{Theorem A}
\newtheorem{definition}{Definition}
\newtheorem{lemma}{Lemma}
\begin{document}\large{
\title{Convergence of row sequences of simultaneous Pad\'{e}-Faber approximants \footnote{This paper is accepted and will be published in Journal 
``Mathematical Notes".}}
\author{N. Bosuwan\thanks{The research of N. Bosuwan was supported by the Strengthen Research Grant for New Lecturer from the Thailand Research Fund and the Office of the Higher Education Commission (MRG6080133) and Faculty of Science, Mahidol University.}\,\,\,\footnote{Corresponding author.}}   }
\maketitle

\centerline {Department of Mathematics, Faculty of Science, Mahidol University}
\centerline {Rama VI Road, Ratchathewi District, Bangkok 10400, Thailand}
\centerline {e-mail : {\tt nattapong.bos@mahidol.ac.th}}
\centerline {Centre of Excellence in Mathematics, CHE}
\centerline {Si Ayutthaya Road, Bangkok 10400, Thailand}


\section*{Abstract}

We consider row sequences of vector valued Pad\'{e}-Faber approximants (simultaneous Pad\'{e}-Faber approximants) and prove a Montessus de Ballore type theorem.

\quad

 \section*{Keywords}{Montessus de Ballore's theorem; Pad\'{e}-Faber approximants; Simultaneous Pad\'{e} approximants; Hermite-Pad\'{e} approximants.}

 \section*{Mathematics Subject Classification:}
30E10; 41A21.

\section{Introduction}

\quad Let $E$ be a compact set of the complex plane $\mathbb{C}$ such that $\overline{\mathbb{C}}\setminus E$ is simply connected and $E$ contains more than one point. There exists a unique exterior conformal mapping $\Phi$ from $\overline{\mathbb{C}}\setminus E$ onto $\overline{\mathbb{C}}\setminus \{w\in \mathbb{C}: |w|\leq 1\}$ satisfying $\Phi(\infty)=\infty$ and $\Phi'(\infty)>0.$ For any $\rho>1,$  we denote by 
 $$\Gamma_{\rho}:=\{z\in \mathbb{C}: |\Phi(z)|=\rho\}, \quad \quad \mbox{and} \quad \quad D_{\rho}:=E\cup \{z\in \mathbb{C}: |\Phi(z)|<\rho\},$$
 a \emph{level curve with respect to $E$ of index $\rho$} and a \emph{canonical domain with respect to $E$ of index $\rho$}, respectively. The Faber polynomials (see \cite{Suetin}) for $E$ are defined by the formulas 
 \begin{equation}\label{polynomial}
 \Phi_n(z):=\frac{1}{2\pi i}\int_{\Gamma_{\rho}} \frac{\Phi^n(t)}{t-z}dt, \quad\quad z\in D_{\rho}, \quad \quad n=0,1,2,\ldots. 
 \end{equation} Denote by $\mathcal{H}(E)$ the space of all functions holomorphic in some neighborhood of $E.$ We define
$$\mathcal{H}(E)^d:=\{(F_1,F_2,\ldots,F_d): \textup{$F_\alpha\in \mathcal{H}(E)$ for all $\alpha=1,2,\ldots,d$}\}$$
and the set of all nonnegative integers is denoted by $\mathbb{N}.$ 

\begin{definition}\label{simuf}\textup{ Let $\textup{\textbf{F}}=(F_1,F_2,\ldots,F_d)\in \mathcal{H}(E)^d.$ Fix a multi-index $\textup{\textbf{m}}=(m_1,m_2,\ldots, $ $m_d)\in \mathbb{N}^d \setminus \{\textup{\textbf{0}}\},$ where  $\textup{\textbf{0}}$ is the zero vector in $\mathbb{N}^d.$ Set $|\textup{\textbf{m}}|=m_1+m_2+\ldots+m_d.$ Then, for each $n\geq \max\{m_1,m_2,\ldots,m_d\},$ there exist polynomials $Q_{n,\textup{\textbf{m}}}$ and $ P_{n,\textup{\textbf{m}},\alpha},$ $\alpha=1,2,\ldots,d$ such that 
$$\deg(P_{n,\textup{\textbf{m}},\alpha})\leq n-m_\alpha, \quad \deg(Q_{n,\textup{\textbf{m}}})\leq |\textup{\textbf{m}}|, \quad Q_{n,\textup{\textbf{m}}}\not\equiv 0,$$
$$ Q_{n,\textup{\textbf{m}}} F_\alpha-P_{n,\textup{\textbf{m}},\alpha}=a_{n+1,n}^{(\alpha)}\Phi_{n+1}(z)+a_{n+2,n}^{(\alpha)}\Phi_{n+2}(z)+\ldots,$$
for all $\alpha=1,2,\ldots,d.$
The vector of rational functions 
$$\textup{\textbf{R}}_{n,\textup{\textbf{m}}}:=(R_{n,\textup{\textbf{m}},1},R_{n,\textup{\textbf{m}},2},\ldots,R_{n,\textup{\textbf{m}},d})=(P_{n,\textup{\textbf{m}},1}/Q_{n,\textup{\textbf{m}}},P_{n,\textup{\textbf{m}},2}/Q_{n,\textup{\textbf{m}}},\ldots,P_{n,\textup{\textbf{m}},d}/Q_{n,\textup{\textbf{m}}})$$ is called an \emph{$(n, \textup{\textbf{m}})$ (linear) simultaneous Pad\'{e}-Faber approximant of $\textup{\textbf{F}}.$}}
\end{definition}

In fact, the numbers $a_{k,n}^{(\alpha)}$ depend on $\textup{\textbf{m}}$ but to simplify the notation we will not indicate it. It is easy to see that if $d=1,$ then the linear simultaneous Pad\'{e}-Faber approximants reduce to the linear Pad\'{e}-Faber approximants with a slight modification on the index $n$ (see, e.g., \cite{Sutinpade} for the definition of linear Pad\'{e}-Faber approximants). Moreover, for the case when $d=1,$ we would like to point out that there is another related construction called nonlinear Pad\'{e}-Faber approximants (see \cite{nonlinear}). Unlike the classical case, these linear and nonlinear Pad\'{e}-Faber approximants lead, in general, to different rational functions (see examples in \cite{nonlinear} and \cite{Suetin2009}). Because we will restrict our attention in this paper to linear simultaneous Pad\'{e}-Faber approximants,  in the sequel, we will omit the word ``linear'' when we refer to them.

For any pair $(n,\textup{\textbf{m}}),$ a vector of rational functions $\textup{\textbf{R}}_{n,\textup{\textbf{m}}}$ always exists but, in general, it may not be unique. In what follows, we assume that given $(n,\textup{\textbf{m}}),$ one solution is taken. 


Now, let us introduce a definition of a pole and its order for a vector of functions.
\begin{definition}\textup{ Let $\textup{\textbf{F}}=(F_1,F_2,\ldots,F_d)\in \mathcal{H}(E)^d$ be a vector of functions meromorphic in some domain $D.$  We say that \emph{$\lambda$ is a pole of $\textup{\textbf{F}}$ in $D$ of order $\tau$} if there exists an index $\alpha\in \{1,2,\ldots,d\}$ such that $\lambda$ is a pole of $F_\alpha$ in $D$ of order $\tau$  and for the rest of the  indices $j\not=\alpha$, either $\lambda$ is not a pole of $F_j$ or  $\lambda$ is a pole of $F_j$ with order less than or equal to $\tau.$}
\end{definition}

Let $\textup{\textbf{F}}\in \mathcal{H}(E)^d.$ Denote by $\rho_{|\textup{\textbf{m}}|}(\textup{\textbf{F}})$ the index $\rho>1$ of the largest canonical domain $D_{\rho}$ inside of which $\textup{\textbf{F}}$ has at most $|\textup{\textbf{m}}|$ poles. Let $\lambda_1,\lambda_2,\ldots, \lambda_q$ be the distinct poles of $\textup{\textbf{F}}$ in $D_{\rho_{|\textup{\textbf{m}}|}(\textup{\textbf{F}})}$ and let $$L:=\left(1+\min_{j=1,2,\ldots,q} |\Phi(\lambda_j)|\right)/2.$$ The set of these poles is denoted by $\mathcal{P}_{|\textup{\textbf{m}}|}(\textup{\textbf{F}}).$ The normalization of $Q_{n,\textup{\textbf{m}}}$ used in this paper in terms of its zeros $\lambda_{n,j}$ is the following:
\begin{equation}\label{nor}
Q_{n,\textup{\textbf{m}}}(z):=\prod_{|\Phi(\lambda_{n,j})|\leq L} (z-\lambda_{n,j}) \prod_{|\Phi(\lambda_{n,j})|> L} \left(1-\frac{z}{\lambda_{n,j}}\right).
\end{equation}
Denote by $Q_{|\textup{\textbf{m}}|}^{\textup{\textbf{F}}}$  the polynomial whose zeros are the poles of $\textup{\textbf{F}}$ in $D_{\rho_{|\textup{\textbf{m}}|}(\textup{\textbf{F}})}$ counting multiplicities normalized as in \eqref{nor}. 

Before going into details, let us describe the convergence of row sequences of Pad\'{e}-Faber approximants which is corresponding to the simultaneous Pad\'{e}-Faber approximants for the scalar case ($d=1$). When $d=1,$ we write $\textup{\textbf{F}}=F,$ $|\textup{\textbf{m}}|=\textup{\textbf{m}}=m\in \mathbb{N},$ $\mathcal{P}_{|\textup{\textbf{m}}|}(\textup{\textbf{F}})=\mathcal{P}_{m}(F),$ $\rho_{|\textup{\textbf{m}}|}(\textup{\textbf{F}})=\rho_{m}(F),$  and  $\textup{\textbf{R}}_{n,\textup{\textbf{m}}}=R_{n,m}.$ An analogue of Montessus de Ballore's theorem for Pad\'{e}-Faber approximants proved by Suetin \cite{Sutinpade} is the following:

\begin{thm1}\label{montessusana}
 Suppose $F\in \mathcal{H}(E)$ has poles of total multiplicity exactly $m$ in $D_{\rho_{m}(F)}.$ Then, $R_{n,m}$ is uniquely determined for all sufficiently large $n$ and the sequence $R_{n,m}$ converges uniformly to $F$ inside $D_{\rho_m(F)}\setminus \mathcal{P}_m(F)$ as $n \rightarrow \infty.$ Moreover, for any compact subset $K$ of $D_{\rho_m(F)}\setminus \mathcal{P}_m(F),$
\begin{equation}\label{asymnroot}
\limsup_{n\rightarrow \infty} \|F-R_{n,m}\|^{1/n}_{K}\leq \frac{\|\Phi\|_K}{\rho_m(F)},
\end{equation}
where $\|\cdot \|_{K}$ denotes the sup-norm on $K$ and if $K\subset E,$ then $\|\Phi\|_K$ is replaced by $1.$ 
\end{thm1}

Here and in what follows, the phrase ``uniformly inside a domain" means ``uniformly on each compact subset of the domain". 
The goal of this paper is to extend the above result from the scalar case to the vector case.

In \cite{MorrisSaff}, Graves-Moris and Saff proved a Montessus de Ballore type theorem for simultaneous Pad\'{e} approximants (in the context of Taylor expansions) using the concept of polewise independent of a vector of functions. We adapt their notion to fit our type of regions. 

\begin{definition}\textup{ Let $\textup{\textbf{F}}=(F_1,F_2,\ldots,F_d)\in\mathcal{H}(E)^d$ be a vector of functions meromorphic in some canonical domain $D_{\rho}$ and let $\textup{\textbf{m}}=(m_1,m_2,\ldots,m_d)\in \mathbb{N}^d \setminus \{\textup{\textbf{0}}\}$ be the multi-index.  Then the function $\textup{\textbf{F}}$ is said to be \emph{polewise independent with respect to the multi-index $\textup{\textbf{m}}$ in $D_\rho$} if and only if there do not exist  polynomials $v_1,v_2,\ldots,v_d$ at least one of which is non-null, satisfying 
\begin{enumerate}
\item [(i)] $\deg v_\alpha \leq m_\alpha-1,$ $\alpha=1,2,\ldots,d,$ if $m_\alpha\geq 1,$
\item [(ii)] $v_\alpha \equiv 0$ if $m_\alpha=0,$ 
\item [(iii)] $\sum_{\alpha=1}^{d} (v_\alpha\circ \Phi) \cdot  F_\alpha  \in \mathcal{H}(D_{\rho}\setminus E),$
\end{enumerate}
where $\mathcal{H}(D_{\rho}\setminus E)$ is the space of all holomorphic functions in $D_{\rho}\setminus E.$
}
\end{definition}

Our main result served as the extension of Theorem A  is the following:

\begin{thm}\label{thm1.4} Let $\textup{\textbf{F}}=(F_1,F_2,\ldots,F_d)\in \mathcal{H}(E)^d$ be a vector of functions meromorphic in $D_{\rho_{|\textup{\textbf{m}}|}(\textup{\textbf{F}})}$ and $\textup{\textbf{m}}\in \mathbb{N}^d\setminus \{\textup{\textbf{0}}\}$ be a fixed multi-index. Suppose that $\textup{\textbf{F}}$ is polewise independent with respect to the multi-index $\textup{\textbf{m}}$ in $D_{\rho_{|\textup{\textbf{m}}|}(\textup{\textbf{F}})}.$ Then, $\textup{\textbf{R}}_{n,\textup{\textbf{m}}}$ is uniquely determined for all sufficiently large $n$ and for each $\alpha=1,2,\ldots,d,$  $R_{n,\textup{\textbf{m}},\alpha}$ converges uniformly to $F_\alpha$ inside $D_{\rho_{|\textup{\textbf{m}}|}(\textup{\textbf{F}})}\setminus \mathcal{P}_{|\textup{\textbf{m}}|}(\textup{\textbf{F}}).$ Moreover, for each $\alpha=1,2,\ldots,d$ and for any compact set $K\subset D_{\rho_{|\textup{\textbf{m}}|}(\textup{\textbf{F}})}\setminus \mathcal{P}_{|\textup{\textbf{m}}|}(\textup{\textbf{F}}),$
\begin{equation}\label{2.4}
\limsup_{n \rightarrow \infty} \|F_\alpha-R_{n,\textup{\textbf{m}},\alpha}\|_{K}^{1/n}\leq \frac{\|\Phi\|_K}{\rho_{|\textup{\textbf{m}}|}(\textup{\textbf{F}})},
\end{equation}
where $\|\cdot\|_K$ denotes the sup-norm on $K$ and if $K\subset E,$ then $\|\Phi\|_K$ is replaced by $1.$ Additionally,
\begin{equation}\label{2.5}
\limsup_{n \rightarrow \infty} \|Q_{n,\textup{\textbf{m}}}-Q_{|\textup{\textbf{m}}|}^{\textup{\textbf{F}}}\|^{1/n}\leq \frac{
\max_{\lambda\in \mathcal{P}_{|\textup{\textbf{m}}|}(\textup{\textbf{F}})} |\Phi(\lambda)|}{\rho_{|\textup{\textbf{m}}|}(\textup{\textbf{F}})},
\end{equation}
where $\|\cdot\|$ denotes (for example) the  norm induced in the space of polynomials of degree at most $|\textup{\textbf{m}}|$ by the maximum of the absolute value of the coefficients.
\end{thm}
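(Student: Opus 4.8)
The plan is to reduce everything to the asymptotic behaviour of the denominators $Q_{n,\mathbf{m}}$ and then read the approximants off from them. Write $\rho^{*}:=\rho_{|\mathbf{m}|}(\mathbf{F})$, $Q^{*}:=Q_{|\mathbf{m}|}^{\mathbf{F}}$, $\Psi:=\Phi^{-1}$, and $w_{j}:=\Phi(\lambda_{j})$ for the poles $\lambda_{1},\dots,\lambda_{q}$ of $\mathbf{F}$ in $D_{\rho^{*}}$; denote by $[h]_{k}$ the $k$-th Faber coefficient of $h\in\mathcal{H}(E)$. The defining relations of the approximant say precisely that, for each $\alpha$ with $m_{\alpha}\ge 1$,
\begin{equation*}
[\,Q_{n,\mathbf{m}}F_{\alpha}\,]_{k}=0,\qquad n-m_{\alpha}<k\le n,
\end{equation*}
and that $a_{k,n}^{(\alpha)}=[\,Q_{n,\mathbf{m}}F_{\alpha}\,]_{k}$ for $k>n$, since $P_{n,\mathbf{m},\alpha}$ is just the Faber partial sum of $Q_{n,\mathbf{m}}F_{\alpha}$ through degree $n-m_{\alpha}$. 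Throughout I would use the representation $[h]_{k}=\frac{1}{2\pi i}\oint_{|\tau|=r}h(\Psi(\tau))\,\tau^{-k-1}\,d\tau$ (valid for $1<r<\rho$ when $h$ extends holomorphically to $D_{\rho}$) together with the standard Faber decay $\limsup_{k}|[h]_{k}|^{1/k}=1/\rho$ when $D_{\rho}$ is the maximal canonical domain of holomorphy of $h$.

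Next I would turn the orthogonality relations into a linear system in the values and derivatives of $Q_{n,\mathbf{m}}$ at the poles. Starting from the integral for $[Q_{n,\mathbf{m}}F_{\alpha}]_{k}$ and deforming the contour outward to $|\tau|=R$ with $\max_{j}|w_{j}|<R<\rho^{*}$, the residue theorem gives
\begin{equation*}
[\,Q_{n,\mathbf{m}}F_{\alpha}\,]_{k}=\frac{1}{2\pi i}\oint_{|\tau|=R}\frac{Q_{n,\mathbf{m}}(\Psi(\tau))F_{\alpha}(\Psi(\tau))}{\tau^{k+1}}\,d\tau-\sum_{j=1}^{q}\mathrm{Res}_{\,\tau=w_{j}}\frac{Q_{n,\mathbf{m}}(\Psi(\tau))F_{\alpha}(\Psi(\tau))}{\tau^{k+1}}.
\end{equation*}
Each residue is a finite linear combination of $Q_{n,\mathbf{m}}(\lambda_{j}),Q_{n,\mathbf{m}}'(\lambda_{j}),\dots$ (up to the order of the pole of $F_{\alpha}$ at $\lambda_{j}$) with coefficients of exact size $w_{j}^{-k}$ times a polynomial in $k$, while the $R$-integral is $O(R^{-k})$ once the coefficients of $Q_{n,\mathbf{m}}$ are controlled. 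Here I would first establish that the normalized $Q_{n,\mathbf{m}}$ have uniformly bounded coefficients (this is exactly where the normalization \eqref{nor} and a compactness/Cauchy-estimate argument enter, preventing zeros from escaping and keeping any limit non-trivial), so that along any subsequence $Q_{n,\mathbf{m}}$ converges to some $Q\not\equiv 0$ of degree at most $|\mathbf{m}|$.

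The heart of the matter is to identify every such limit as $Q^{*}$ and to make this quantitative. Taking $k$ in the window $n-m_{\alpha}<k\le n$ and normalizing by the slowest decaying factor, the conditions become, for $\alpha=1,\dots,d$, a confluent (generalized Vandermonde) linear system whose unknowns are the jets $(Q_{n,\mathbf{m}}(\lambda_{j}),Q_{n,\mathbf{m}}'(\lambda_{j}),\dots)$ at the poles and whose right-hand sides are the $O(R^{-k})$ errors. Polewise independence enters decisively: it is precisely the hypothesis ensuring that the limiting coefficient matrix of this system is nonsingular, since a non-trivial kernel vector would furnish non-null polynomials $v_{1},\dots,v_{d}$ with $\deg v_{\alpha}\le m_{\alpha}-1$ making $\sum_{\alpha}(v_{\alpha}\circ\Phi)F_{\alpha}$ holomorphic across the poles. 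Granting this, the normalized jets are forced to the jet of $Q^{*}$ (vanishing at each $\lambda_{j}$ to the full pole order), so $Q=Q^{*}$ for every subsequential limit and hence $Q_{n,\mathbf{m}}\to Q^{*}$; solving the system quantitatively and letting $R\uparrow\rho^{*}$ yields
\begin{equation*}
\limsup_{n\to\infty}\|Q_{n,\mathbf{m}}-Q^{*}\|^{1/n}\le\frac{\max_{\lambda\in\mathcal{P}_{|\mathbf{m}|}(\mathbf{F})}|\Phi(\lambda)|}{\rho^{*}},
\end{equation*}
which is \eqref{2.5}. I expect this linear-algebra step to be the main obstacle: the poles have distinct moduli $|w_{j}|$ and possibly high multiplicities, so the system is only block-triangular after grouping poles by modulus, and the delicate points are deriving nonsingularity from polewise independence and propagating the sharp exponent $\max_{j}|w_{j}|/\rho^{*}$ through the inversion. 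I would lean on the cited confluent-Vandermonde lemmas of Gonchar and Buslaev to handle this cleanly.

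Finally I would deduce uniqueness and the approximation rate. Nonsingularity of the limiting system shows that for all large $n$ the homogeneous system defining $Q_{n,\mathbf{m}}$ has corank one, so $Q_{n,\mathbf{m}}$ is determined up to a scalar and $\mathbf{R}_{n,\mathbf{m}}$ is unique. For \eqref{2.4} I would use $F_{\alpha}-R_{n,\mathbf{m},\alpha}=(Q_{n,\mathbf{m}}F_{\alpha}-P_{n,\mathbf{m},\alpha})/Q_{n,\mathbf{m}}$. On a compact $K\subset D_{\rho^{*}}\setminus\mathcal{P}_{|\mathbf{m}|}(\mathbf{F})$ the denominator is bounded away from $0$ because $Q_{n,\mathbf{m}}\to Q^{*}$, which is non-vanishing there, so it remains to estimate the remainder $\sum_{k>n}a_{k,n}^{(\alpha)}\Phi_{k}(z)$. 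Writing this tail as a double contour integral (using the two representations $[h]_k=\frac{1}{2\pi i}\oint h(\Psi(\tau))\tau^{-k-1}d\tau$ and $\Phi_{k}(z)=\frac{1}{2\pi i}\oint \sigma^{k}\Psi'(\sigma)(\Psi(\sigma)-z)^{-1}d\sigma$), summing the geometric series in $k$, and deforming the $\tau$-contour out to radius $R\uparrow\rho^{*}$ while collecting residues, one gets a smooth integral of size $O((\|\Phi\|_{K}/\rho^{*})^{n})$ together with residue terms each carrying a factor $Q_{n,\mathbf{m}}(\lambda_{j})$ times $(\|\Phi\|_{K}/|w_{j}|)^{n}$. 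Invoking \eqref{2.5} to bound $Q_{n,\mathbf{m}}(\lambda_{j})$ by $(|w_{j}|/\rho^{*})^{n}$ up to subexponential factors, every residue term is again $O((\|\Phi\|_{K}/\rho^{*})^{n})$, which gives \eqref{2.4}; when $K\subset E$ one replaces $\|\Phi\|_{K}$ by $1$ because $\limsup_{k}\|\Phi_{k}\|_{K}^{1/k}\le 1$ there.
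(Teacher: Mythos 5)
Your route is genuinely different from the paper's in its ordering --- you would prove the denominator asymptotics \eqref{2.5} first, directly from a residue-generated linear system, and then get \eqref{2.4} by division, avoiding the $h$-content/Gonchar-lemma step the paper uses --- and the engine (interpolation window $\to$ residues $\to$ linear system, with polewise independence guaranteeing solvability) is the same in both. But the step you yourself call the heart of the matter fails as you describe it. Because your unknowns are the jets $Q_{n,\mathbf{m}}^{(t)}(\lambda_j)$, your coefficient matrix depends on $n$: writing $w_j:=\Phi(\lambda_j)$ and $k=n-m_\alpha+y$, the jet $(j,t)$ enters the $(\alpha,y)$ equation with coefficient $w_j^{-k}$ times a polynomial in $k$ of degree $s_j^{(\alpha)}-1-t$, where $s_j^{(\alpha)}$ is the order of the pole of $F_\alpha$ at $\lambda_j$ --- and this degree varies with $\alpha$. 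As a result, after normalizing ``by the slowest decaying factor'' the limit matrix can be singular even though $\mathbf{F}$ is polewise independent, and its kernel vectors do \emph{not} correspond to the polynomials $v_\alpha$ in the definition. Concrete example: $d=2$, $\mathbf{m}=(1,1)$, one pole $\lambda_1$ with $\tau_1=2$, at which $F_1$ has a double pole and $F_2$ a simple pole; polewise independence holds automatically, and the system reads
\begin{equation*}
\begin{pmatrix} c_{11}\,n\,w_1^{-n}\bigl(1+o(1)\bigr) & c_{12}\,w_1^{-n}\bigl(1+o(1)\bigr) \\ c_{21}\,w_1^{-n}\bigl(1+o(1)\bigr) & 0 \end{pmatrix}
\begin{pmatrix} Q_{n,\mathbf{m}}(\lambda_1)\\ Q_{n,\mathbf{m}}'(\lambda_1)\end{pmatrix}
=\begin{pmatrix} O(R^{-n})\\ O(R^{-n})\end{pmatrix}
\end{equation*}
with $c_{11},c_{12},c_{21}\neq 0$. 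The naturally scaled limit matrix is $\begin{pmatrix} \ast & \ast \\ 0 & 0 \end{pmatrix}$, which is singular, and its kernel corresponds to $v_1\equiv 0$, $v_2\equiv\mathrm{const}$, which does \emph{not} make $v_2F_2$ holomorphic at $\lambda_1$; so singularity of your limit matrix simply does not detect polewise dependence. (The finite-$n$ system is fine --- its determinant is $\asymp |w_1|^{-2n}\neq 0$ --- it is the passage to a limit matrix that loses the information, and no Vandermonde/confluence lemma repairs this.) The missing idea, which is exactly the paper's central device, is a change of unknowns: solve for the quantities $\beta_n(j,t)$ built from the derivatives of $Q_{n,\mathbf{m}}\Phi'/\Phi^{n+1}$ at $\lambda_j$, rather than for the jets of $Q_{n,\mathbf{m}}$. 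Since $\Phi^{n-k}=\Phi^{m_\alpha-y}$ for window indices, the resulting coefficient matrix $\Delta$, with entries $\bigl[(z-\lambda_j)^{\tau_j}F_\alpha\Phi^{m_\alpha-y}\bigr]^{(t)}_{z=\lambda_j}$, is independent of $n$; a vanishing row combination of $\Delta$ produces precisely the polynomials $v_\alpha$, so polewise independence gives $|\Delta|\geq c>0$ once and for all, Cramer's rule applies uniformly in $n$, and your desired jet rates then follow by inverting an explicit block-triangular relation between the $\beta_n(j,t)$ and the jets.

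The second load-bearing step also has a gap: you invoke \eqref{2.5} to bound $|Q_{n,\mathbf{m}}(\lambda_j)|$ by $(|w_j|/\rho_{|\mathbf{m}|}(\mathbf{F}))^n$, but since $Q^{\mathbf{F}}_{|\mathbf{m}|}(\lambda_j)=0$, \eqref{2.5} only yields the \emph{uniform} rate $(\max_{j'}|w_{j'}|/\rho_{|\mathbf{m}|}(\mathbf{F}))^n$ at every pole. Where you use it this is fatal: the residue term at $\lambda_j$ is (jet at $\lambda_j$) times a kernel factor of size $(\|\Phi\|_K/|w_j|)^n$, so the uniform rate only gives $\bigl((\max_{j'}|w_{j'}|/|w_j|)\,\|\Phi\|_K/\rho_{|\mathbf{m}|}(\mathbf{F})\bigr)^n$, which exceeds the target $(\|\Phi\|_K/\rho_{|\mathbf{m}|}(\mathbf{F}))^n$ by an exponentially growing factor whenever $|w_j|<\max_{j'}|w_{j'}|$. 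The cancellation you need is per-pole: $|Q^{(t)}_{n,\mathbf{m}}(\lambda_j)|\leq n^{c}(|w_j|/R)^n$ for each $j$ and for all $t\leq\tau_j-1$ (not only $t=0$: when poles are multiple, the residues involve the full jet). These refined bounds do come out of the correctly solved linear system, so the repair stays inside your scheme, but they must be carried forward as such rather than re-derived from \eqref{2.5}. Finally, ``corank one of the limiting system'' is not by itself a proof of uniqueness of $\mathbf{R}_{n,\mathbf{m}}$; the clean argument (also the paper's) is that non-uniqueness along a subsequence would permit solutions with $\deg Q_{n,\mathbf{m}}<|\mathbf{m}|$, whose normalized subsequential limits would be nonzero polynomials of degree less than $|\mathbf{m}|$ divisible by the degree-$|\mathbf{m}|$ polynomial $Q^{\mathbf{F}}_{|\mathbf{m}|}$ --- a contradiction.
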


Since the space of polynomials of degree at most $|\textup{\textbf{m}}|$ has a finite dimension, all of its norms are equivalent so we can put any norm in \eqref{2.5}.

An outline of this paper is as follows. In the section 2, we introduce some more notation and auxiliary lemmas. The proof of the main result is in the section 3.


\section{Notation and auxiliary results}\label{123}

\quad \quad  First, let us discuss some properties of Faber polynomial expansions of holomorphic functions which play a major role in our proof. The \emph{Faber coefficient} of $G\in \mathcal{H}(E)$ with respect to $\Phi_n$ is given by
\begin{equation*}\label{Fourierco}
[G]_n:=\frac{1}{2\pi i}\int_{\Gamma_\rho} \frac{G(t) \Phi'(t)}{\Phi^{n+1}(t)} dt,
\end{equation*}
where $\rho\in (1,\rho_{0}(G)).$
The following lemma (see, e.g., \cite{SmirnovLebedev}) is obtained in the same way as similar statements are proved for Taylor series.
\begin{lemma}\label{expan} Let $G\in \mathcal{H}(E).$ Then,
$$\rho_0(G)=\left(\limsup_{n \rightarrow \infty} |[G]_n|^{1/n} \right)^{-1}.$$
Moroever, the series $\sum_{n=0}^{\infty} [G]_n \Phi_n$ converges to $G$ uniformly inside  ${D}_{\rho_{0}(G)}.$
\end{lemma}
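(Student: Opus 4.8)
The plan is to prove this as the Faber-series analogue of the Cauchy--Hadamard theorem, following verbatim the template for power/Taylor series; the only Faber-specific ingredient is the generating-function identity for the polynomials $\Phi_n$. Write $R:=\left(\limsup_{n}|[G]_n|^{1/n}\right)^{-1}$, and recall that $\rho_0(G)$ denotes the index of the largest canonical domain in which $G$ is holomorphic. I would establish $R\ge\rho_0(G)$ and $R\le\rho_0(G)$ together with the convergence. For the first inequality (the coefficient bound), fix $\rho\in(1,\rho_0(G))$. Since $|\Phi(t)|=\rho$ on $\Gamma_\rho$, the defining integral of $[G]_n$ gives $|[G]_n|\le C(\rho)\rho^{-n-1}$ with $C(\rho):=\frac{1}{2\pi}\int_{\Gamma_\rho}|G(t)|\,|\Phi'(t)|\,|dt|<\infty$. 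The root test yields $\limsup_n|[G]_n|^{1/n}\le 1/\rho$, and letting $\rho\uparrow\rho_0(G)$ gives $\limsup_n|[G]_n|^{1/n}\le 1/\rho_0(G)$, i.e.\ $R\ge\rho_0(G)$.

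Next, for convergence of the series on $D_R$, I would bound the Faber polynomials directly from \eqref{polynomial}: for a compact $K\subset D_\rho$ one has $\operatorname{dist}(K,\Gamma_\rho)=:\delta>0$, whence $\max_{z\in K}|\Phi_n(z)|\le \frac{\ell_\rho}{2\pi\delta}\,\rho^n=:C'\rho^n$, where $\ell_\rho$ is the length of $\Gamma_\rho$. Given any compact $K\subset D_R$, choose $\rho$ with $\max\{1,\max_{z\in K\setminus E}|\Phi(z)|\}<\rho<R$, so that $K\subset D_\rho$; then $\sum_n|[G]_n|\max_K|\Phi_n|\le C'\sum_n|[G]_n|\rho^n$, and since $(|[G]_n|\rho^n)^{1/n}\to\rho/R<1$, this series converges by the root test. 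Hence the Faber series converges absolutely and uniformly on each compact subset of $D_R$, and its sum $S$ is holomorphic in $D_R$.

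For the identification $S=G$, I would insert the generating-function identity $\sum_{n\ge0}\Phi_n(z)/\Phi^{n+1}(t)=\bigl(\Phi'(t)(t-z)\bigr)^{-1}$, which is geometrically (hence uniformly on $\Gamma_\rho$) convergent whenever $|\Phi(t)|>|\Phi(z)|$; this is the Faber analogue of $\sum_n z^n/w^{n+1}=(w-z)^{-1}$ and may be cited from Suetin's monograph. For $z\in D_\rho$ with $\rho<\rho_0(G)$, interchanging summation and the integral defining $[G]_n$ (legitimate by the uniform convergence on the compact contour $\Gamma_\rho$) yields $\sum_n[G]_n\Phi_n(z)=\frac{1}{2\pi i}\int_{\Gamma_\rho}\frac{G(t)}{t-z}\,dt=G(z)$ by Cauchy's formula, the factor $\Phi'(t)$ in the coefficient cancelling against the kernel. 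Thus $S\equiv G$ on $D_{\rho_0(G)}$. If $R>\rho_0(G)$ held, then $S$ would be a holomorphic extension of $G$ to the strictly larger canonical domain $D_R$, contradicting the maximality in the definition of $\rho_0(G)$; hence $R\le\rho_0(G)$, and together with the first step $R=\rho_0(G)$. This gives both the stated formula and uniform convergence of the series to $G$ inside $D_{\rho_0(G)}$.

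The routine parts are the two root-test arguments, which run exactly as for power series. The only genuine obstacle is the identification step: one must have the Faber generating-function identity available and must justify the interchange of sum and integral. The key simplification is that the factor $\Phi'(t)$ appearing in the coefficient formula cancels the same factor in the kernel, reducing the whole computation to the Cauchy integral representation of $G$ on $\Gamma_\rho$.
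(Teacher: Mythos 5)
Your proposal is correct: the paper offers no proof of this lemma, merely citing Smirnov--Lebedev with the remark that it ``is obtained in the same way as similar statements are proved for Taylor series,'' and your argument is precisely that standard Cauchy--Hadamard-style argument carried out in full (coefficient bound on $\Gamma_\rho$, growth bound $\|\Phi_n\|_K\le C'\rho^n$ from \eqref{polynomial}, and identification of the sum with $G$ via the generating identity $\sum_{n\ge 0}\Phi_n(z)/\Phi^{n+1}(t)=\bigl(\Phi'(t)(t-z)\bigr)^{-1}$ and Cauchy's formula). All steps, including the interchange of sum and integral and the maximality argument forcing $R=\rho_0(G)$, are sound.
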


As a consequence of Lemma \ref{expan},
if $\textup{\textbf{F}}=(F_1,F_2,\ldots,F_d)\in \mathcal{H}(E)^d$, then for each $\alpha=1,2,\ldots,d,$ 
\begin{equation}\label{usethisasdef}
Q_{n,\textup{\textbf{m}}}(z)F_\alpha(z)-P_{n,\textup{\textbf{m}},\alpha}(z)=\sum_{k=n+1}^{\infty} [ Q_{n,\textup{\textbf{m}}}F_\alpha]_{k}\,\Phi_k(z), \quad \quad z\in D_{\rho_{0}(F_\alpha)},
\end{equation}
and $P_{n,\textup{\textbf{m}},\alpha}=\sum_{k=0}^{n-m_\alpha} [ Q_{n,\textup{\textbf{m}}} F_\alpha]_{k}\,\Phi_k$ is uniquely determined by $Q_{n,\textup{\textbf{m}}}.$

Next, let us introduce a concept of convergence in $h$-content. Let $B$ be a subset of the complex plane $\mathbb{C}$. By $\mathcal{U}(B),$ we denote the class of all
coverings of $B$ by at most a numerable set of disks. Set  
$$h(B):= \inf\left\{\sum_{j=1}^{\infty} |U_j|:\{U_j\}\in \mathcal{U}(B) \right\},$$  
where $|U_j|$ stands for the radius of the disk $U_j.$ The quantity $h(B)$ is called the $1$-dimensional Hausdorff content of the set $B.$ This set function is not a measure but it is semi-additive and monotonic. 

\begin{definition}\textup{ Let $\{g_n\}_{n\in \mathbb{N}}$ be a sequence of complex valued functions defined on a domain $D \subset \mathbb{C}$ and $g$ another complex function defined on $D.$ We say that \emph{$\{g_n\}_{n\in \mathbb{N}}$ converges in $h$-content to the function $g$ on compact subsets of $D$} if for every compact subset $K$ of $D$ and for each $\varepsilon>0,$ we have
$$\lim_{n \rightarrow \infty} h\{z\in K:|g_n(z)-g(z)|>\varepsilon\}=0.$$
Such a convergence will be denoted by $h$-$\lim_{n \rightarrow \infty} g_n = g$ in $D.$}
\end{definition}

The next lemma proved by Gonchar (see Lemma 1 in \cite{Gonchar1} or in Section \S2., subsection 2, part b in \cite{Aag81}) allows us to derive uniform convergence on compact subsets of the region under consideration from convergence in $h$-content under appropriate assumptions. 
\begin{lemma}\label{goncharlemma} Suppose that $h$-$\lim_{n \rightarrow \infty} g_n = g$ in $D.$ 
If each of the functions $g_n$ is meromorphic in $D$ and has no more than $k < +\infty$ poles in this domain, then the limit function $g$ is (except on a set of $h$-content zero) also meromorphic and has no more than $k$ poles in $D.$ Hence, in particular, if $g$ has a pole of order $\nu$ at the point $\lambda\in D,$ then at least $\nu$ poles of $g_n$ tend to $\lambda$ as $n \rightarrow \infty.$
\end{lemma}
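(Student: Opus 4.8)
The plan is to prove the statement locally, on an arbitrary closed disk $\bar B=\bar B(c,R)\subset D$, and then patch the local conclusions together along an exhaustion of $D$; since the bound $k$ is the same in every disk, a limit that is meromorphic with at most $k$ poles in each disk is meromorphic with at most $k$ poles in all of $D$. Fix such a disk. For each $n$ let $w_n$ be the monic polynomial whose zeros are the poles of $g_n$ lying in $\bar B$, counted with multiplicity, so that $\deg w_n\le k$ and $h_n:=w_n g_n$ is holomorphic in $B$. Because all these zeros lie in the bounded set $\bar B$, the coefficients of $w_n$ are uniformly bounded, so $\{w_n\}$ is precompact; I would pass to a subsequence $\Lambda$ along which $\deg w_n$ is constant and $w_n\to w$ uniformly on $\bar B$, where $w$ is monic with zeros $\lambda_1,\dots,\lambda_t\in\bar B$ of multiplicities $\nu_1,\dots,\nu_t$ and $\sum_i\nu_i=s\le k$. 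Along $\Lambda$ all poles of $g_n$ in $\bar B$ then converge to the cluster set $\{\lambda_1,\dots,\lambda_t\}$.

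The core of the argument is to upgrade convergence in $h$-content to uniform convergence away from the cluster points. First I would record the elementary geometric fact that if $h(S)<\theta$ then the set of radii $\{|z-c'|:z\in S\}$ has Lebesgue measure at most $2\theta$, since each covering disk projects radially onto an interval of length at most twice its radius. Now fix a closed disk $\bar B'=\bar B(c',r')\subset B\setminus\{\lambda_1,\dots,\lambda_t\}$. For all large $n\in\Lambda$ the function $g_n$ is holomorphic on a neighborhood of $\bar B'$, so the difference $g_n-g_m$ is holomorphic there for large $n,m\in\Lambda$. Given $\varepsilon>0$, convergence in $h$-content makes $h\{|g_n-g_m|>\varepsilon\}$ arbitrarily small, so by the radial-measure fact there is a circle $\partial B(c',\varrho)$ with $\varrho\in(r'/2,r')$ avoiding the set $\{|g_n-g_m|>\varepsilon\}$ entirely; on this circle $|g_n-g_m|\le\varepsilon$, and the maximum principle propagates the bound to $\bar B(c',r'/2)$. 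Hence $\{g_n\}$ is uniformly Cauchy on compact subsets of $B\setminus\{\lambda_1,\dots,\lambda_t\}$ and converges there uniformly to a holomorphic $\tilde g$, which agrees with $g$ off a set of $h$-content zero by uniqueness of the content limit. This is the step I expect to be the main obstacle, since it is where the analytic strength of the hypothesis is extracted; the delicate point is producing a circle on which the holomorphic difference is controlled, rather than merely controlling it off a small set.

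Next I would analyze a neighborhood of each cluster point $\lambda_i$. Choosing $\sigma$ small so that $\bar B(\lambda_i,\sigma)$ meets no other $\lambda_j$, the bounding circle $\partial B(\lambda_i,\sigma)$ lies in the region where $g_n\to\tilde g$ uniformly, hence $h_n=w_n g_n\to w\tilde g$ uniformly there. Since each $h_n$ is holomorphic inside $B(\lambda_i,\sigma)$, the maximum principle shows that $\{h_n\}$ is uniformly bounded on $\bar B(\lambda_i,\sigma)$ and that the differences $h_n-h_m$ are uniformly small inside, so $h_n\to\hat h$ uniformly with $\hat h$ holomorphic. On the punctured disk $g_n=h_n/w_n\to\hat h/w$, so $g=\hat h/w$ there and $g$ extends meromorphically across $\lambda_i$ with a pole of order at most $\nu_i$. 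Combining the two regions shows that $g$ coincides, off an $h$-content-zero set, with a function meromorphic in $B$ whose poles lie among $\lambda_1,\dots,\lambda_t$ with total multiplicity at most $s\le k$; patching over an exhaustion of $D$ yields the first assertion.

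Finally, for the multiplicity statement, suppose $g$ has a pole of order $\nu$ at $\lambda$. Working in a disk about $\lambda$ along $\Lambda$, the representation $g=\hat h/w$ forces $w$ to vanish at $\lambda$ to order at least $\nu$, i.e. $\nu_i\ge\nu$ for the cluster point $\lambda_i=\lambda$; by construction of $w$ as the limit of $w_n$, at least $\nu$ zeros of $w_n$, that is, poles of $g_n$, converge to $\lambda$ along $\Lambda$. To pass to the full sequence I would use the standard subsequence principle: every subsequence of $\{g_n\}$ admits a further subsequence to which the construction applies and along which at least $\nu$ poles approach $\lambda$, so for each $\sigma>0$ eventually at least $\nu$ poles of $g_n$ lie in $B(\lambda,\sigma)$; a labeling argument then produces $\nu$ sequences of poles converging to $\lambda$, which is the desired conclusion.
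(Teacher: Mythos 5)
The paper never proves this lemma: it is quoted as a known result of Gonchar, with a pointer to Lemma 1 of \cite{Gonchar1} and to \cite{Aag81}, so there is no internal proof to compare your attempt against. On its own merits, your argument is in substance a correct reconstruction of Gonchar's original method: multiply $g_n$ by the monic polynomial $w_n$ annihilating its poles in a disk, extract a subsequence along which $w_n$ converges, upgrade convergence in $h$-content to locally uniform convergence off the cluster set via circle selection (the radial projection of a set of $h$-content less than $\theta$ has Lebesgue measure at most $2\theta$, so a full circle avoiding the exceptional set exists, and the maximum principle propagates the bound inward), and then recover meromorphy of the limit near each cluster point from $g_n=h_n/w_n$. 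The identification of the locally uniform limit with $g$ off an $h$-null set, and the sub-subsequence argument passing the pole-attraction conclusion from a subsequence to the full sequence, are both handled correctly; this is exactly where the analytic strength of the hypothesis is spent, as you anticipated.

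One sentence does need repair: the opening claim that a limit which is meromorphic with at most $k$ poles in each disk contained in $D$ is automatically meromorphic with at most $k$ poles in all of $D$ is false as stated for a general domain, because two poles of $g$ need not lie in any common disk contained in $D$ (consider an annulus or a thin snake-shaped region), so the per-disk bound does not propagate to a global bound by itself. The correct route is available from material you already develop: local meromorphy on every disk gives meromorphy of $g$ (off an $h$-null set) throughout $D$; then, if $g$ had poles of total multiplicity exceeding $k$, finitely many of them would already exceed $k$, and the attraction statement applied in pairwise disjoint neighborhoods of those poles (along a common subsequence obtained by diagonalizing over finitely many disks) would force $g_n$ to have more than $k$ poles for all large $n$, contradicting the hypothesis. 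With that replacement the proof is complete, and it matches the argument in Gonchar's cited work rather than anything printed in this paper.
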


Now, we discuss some upper and lower estimates on the normalized $Q_{n,\textup{\textbf{m}}}$ in \eqref{nor}. We take an arbitrary $\varepsilon>0$ and define an open set $J_{\varepsilon}:=J_{\varepsilon}(\textup{\textbf{F}})$ as follows. For $n\geq |\textup{\textbf{m}}|,$ let $J_{n,\varepsilon}$ denote the $\varepsilon/6|\textup{\textbf{m}}|n^2$-neighborhood of the set of zeros of $Q_{n,\textup{\textbf{m}}}$ and let $J_{|\textup{\textbf{m}}|-1,\varepsilon}$ denote the $\varepsilon/6|\textup{\textbf{m}}|$-neighborhood of the set of poles of $\textup{\textbf{F}}$ in $D_{\rho_{|\textup{\textbf{m}}|}(\textup{\textbf{F}})}.$ Set $J_{\varepsilon}=\cup_{n \geq |\textup{\textbf{m}}|-1} J_{n,\varepsilon}.$ From monotonicity and subadditivity, it is easy to check that $h(J_\varepsilon)<\varepsilon$ and $J_{\varepsilon_1}\subset J_{\varepsilon_2}$ for $\varepsilon_1<\varepsilon_2.$ For any set $B\subset \mathbb{C},$ we put $B(\varepsilon):=B\setminus J_{\varepsilon}.$ Clearly, if $\{g_n\}_{n\in \mathbb{N}}$ converges uniformly to $g$ on $K(\varepsilon)$ for every compact $K\subset D$ and $\varepsilon>0,$ then $h$-$\lim_{n \rightarrow \infty} g_n=g$ in $D.$ 

The normalization of $Q_{n,\textup{\textbf{m}}}$ provides the following useful upper and lower bounds on the estimation of $Q_{n,\textup{\textbf{m}}}.$

\begin{lemma} Let $K\subset \mathbb{C}$ be a  compact set and $\varepsilon>0$ be arbitrary. Then, there exist constants $C_1,C_2>0,$ independent of $n,$ such that 
\begin{equation}\label{fromnormal}
\|Q_{n,\textup{\textbf{m}}}\|_{K}\leq C_1, \quad \quad \min_{z\in K(\varepsilon)}|Q_{n,\textup{\textbf{m}}}(z)|\geq C_2 n^{-2|\textup{\textbf{m}}|},
\end{equation} 
where the second inequality is meaningful when $K(\varepsilon)$ is a non-empty set.
\end{lemma}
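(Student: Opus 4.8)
The plan is to exploit the special normalization \eqref{nor}, which writes $Q_{n,\textup{\textbf{m}}}$ as a product of at most $\deg Q_{n,\textup{\textbf{m}}}\le|\textup{\textbf{m}}|$ elementary factors of two kinds, and to estimate each factor separately. Call $(z-\lambda_{n,j})$ with $|\Phi(\lambda_{n,j})|\le L$ a \emph{near} factor and $(1-z/\lambda_{n,j})$ with $|\Phi(\lambda_{n,j})|>L$ a \emph{far} factor. After translating $E$ if necessary, I may and do assume $0\in E$; since $E$ is contained in the open canonical domain $D_L$, this gives $\eta:=\mathrm{dist}(0,\Gamma_L)>0$. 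The near zeros then lie in the fixed compact set $\overline{D_L}$, while the far zeros, which satisfy $|\Phi(\lambda_{n,j})|>L$ and hence lie in $\overline{\mathbb{C}}\setminus D_L$, are bounded away from the origin by $|\lambda_{n,j}|\ge\eta$. Set $M_K:=\max_{z\in K}|z|$.

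For the upper bound, on $K$ each near factor obeys $|z-\lambda_{n,j}|\le M_K+\max_{w\in\overline{D_L}}|w|=:M_1$ and each far factor obeys $|1-z/\lambda_{n,j}|\le 1+M_K/\eta=:M_2$, both independent of $n$. Since there are at most $|\textup{\textbf{m}}|$ factors in total, $\|Q_{n,\textup{\textbf{m}}}\|_K\le\max(M_1,M_2)^{|\textup{\textbf{m}}|}=:C_1$.

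The lower bound is the crux, and here the role of the definition of $J_\varepsilon$ becomes clear: every $z\in K(\varepsilon)=K\setminus J_\varepsilon$ keeps distance at least $\delta_n:=\varepsilon/(6|\textup{\textbf{m}}|n^2)$ from every zero $\lambda_{n,j}$. Thus each near factor satisfies $|z-\lambda_{n,j}|\ge\delta_n$. For a far factor I would distinguish two cases by the size of $\lambda_{n,j}$: if $|\lambda_{n,j}|>2M_K$ then $|1-z/\lambda_{n,j}|\ge 1-M_K/|\lambda_{n,j}|\ge 1/2$, while if $\eta\le|\lambda_{n,j}|\le 2M_K$ then $|1-z/\lambda_{n,j}|=|\lambda_{n,j}-z|/|\lambda_{n,j}|\ge\delta_n/(2M_K)$. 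In either case a far factor is at least $\min(1/2,\delta_n/(2M_K))$, which equals $\delta_n/(2M_K)$ for all large $n$. Multiplying the at most $|\textup{\textbf{m}}|$ factors and using $\delta_n\le 1$ for large $n$ yields $\min_{z\in K(\varepsilon)}|Q_{n,\textup{\textbf{m}}}(z)|\ge(\min(1,(2M_K)^{-1})\,\delta_n)^{|\textup{\textbf{m}}|}$, which is a fixed positive constant times $\delta_n^{|\textup{\textbf{m}}|}=(\varepsilon/6|\textup{\textbf{m}}|)^{|\textup{\textbf{m}}|}n^{-2|\textup{\textbf{m}}|}$. The exponent $-2|\textup{\textbf{m}}|$ arises precisely because each of the at most $|\textup{\textbf{m}}|$ factors contributes a single power of $\delta_n\sim n^{-2}$. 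For the finitely many remaining small values of $n$, the minimum of $|Q_{n,\textup{\textbf{m}}}|$ over the compact set $K(\varepsilon)$ (on which $Q_{n,\textup{\textbf{m}}}$ has no zeros) is a fixed positive number, so shrinking the constant to a suitable $C_2$ makes the estimate valid for all admissible $n$.

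I expect the only genuinely non-mechanical point to be the uniform-in-$n$ control of the far factors $(1-z/\lambda_{n,j})$: a far zero may escape to infinity, so one cannot simply bound this factor below by a multiple of $\delta_n/|\lambda_{n,j}|$; the case split on $|\lambda_{n,j}|$ above is exactly what repairs this, and it is also where the assumption $0\in E$ (equivalently, far zeros bounded away from the origin) is essential. Everything else rests on the two structural facts that $\deg Q_{n,\textup{\textbf{m}}}\le|\textup{\textbf{m}}|$ and that the separation guaranteed on $K(\varepsilon)$ is only polynomially small in $n$.
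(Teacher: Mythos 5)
The paper states this lemma \emph{without proof} --- it is offered as an immediate consequence of the normalization \eqref{nor} --- so there is no authorial argument to compare against; your write-up supplies exactly the missing details, and the core estimates are correct and are surely the intended ones. The decomposition into near factors $(z-\lambda_{n,j})$, $|\Phi(\lambda_{n,j})|\le L$, and far factors $(1-z/\lambda_{n,j})$, $|\Phi(\lambda_{n,j})|>L$; the bound on near factors via compactness of $\overline{D_L}$ and on far factors via $1+M_K/\eta$; the use of the definition of $J_\varepsilon$ to get $|z-\lambda_{n,j}|\ge \varepsilon/(6|\mathbf{m}|n^2)$ on $K(\varepsilon)$, combined with the two-case analysis of $|\lambda_{n,j}|$; and the absorption of finitely many small $n$ by compactness of $K(\varepsilon)$ --- all of this is the standard argument, and the exponent $n^{-2|\mathbf{m}|}$ comes out for precisely the reason you state.

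Two caveats. First, "after translating $E$ I may assume $0\in E$" is not, strictly speaking, a legitimate reduction \emph{for this lemma}: the normalization \eqref{nor} is not translation-covariant. Translating by $a$ multiplies the normalized denominator by $\prod_{\text{far}}(1-a/\lambda_{n,j})$, which is uncontrolled in exactly the situation you are trying to exclude (a far zero approaching the origin), so the lemma for the translated configuration does not transfer back to the original one. Indeed, if $|\Phi(0)|>L$ were allowed, nothing in the hypotheses would prevent a spurious zero with $|\Phi(\lambda_{n,j})|>L$ from tending to $0$, in which case $\|Q_{n,\mathbf{m}}\|_K$ is unbounded and the first inequality of \eqref{fromnormal} fails; no proof can avoid some assumption here. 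The correct reading is that the normalization \eqref{nor} itself tacitly presupposes $|\Phi(0)|\le L$ (for instance $0\in E$) --- the paper needs this elsewhere anyway, e.g.\ for $Q_{|\mathbf{m}|}^{\mathbf{F}}$ to be well defined and when $Q_{n,\mathbf{m}}$ is evaluated at $0$ at the end of the proof of Theorem \ref{thm1.4} --- and this standing assumption is harmless for the main theorem because the whole problem can be translated once and for all at the outset. So your instinct about where $0\in E$ is essential is exactly right; only the packaging ("WLOG for the lemma") should be replaced by "standing assumption of the paper." Second, a range issue: $J_\varepsilon$ contains neighborhoods of the zeros of $Q_{n,\mathbf{m}}$ only for $n\ge|\mathbf{m}|$, so your parenthetical "on which $Q_{n,\mathbf{m}}$ has no zeros," invoked for the finitely many small $n$, is guaranteed only in that range; for $\max_\alpha m_\alpha\le n<|\mathbf{m}|$ (possible when $d\ge 2$) the second inequality of \eqref{fromnormal} can fail, so the lemma must be read as a statement about $n\ge|\mathbf{m}|$.
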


\section{Proof of Theorem \ref{thm1.4}}
\begin{proof}[Proof of Theorem \ref{thm1.4}]

From \eqref{usethisasdef}, we have for each $\alpha=1,2,\ldots,d,$ 
\begin{equation}\label{banana1}
Q_{n,\textup{\textbf{m}}}(z)F_\alpha(z)-P_{n,\textup{\textbf{m}},\alpha}(z)=\sum_{k=n+1}^{\infty} a_{k,n}^{(\alpha)}\Phi_k(z), \quad \quad z\in D_{\rho_{0}(F_\alpha)},
\end{equation}
where $$a_{k,n}^{(\alpha)}:=[ Q_{n,\textup{\textbf{m}}}F_\alpha]_{k}=\frac{1}{2\pi i}\int_{\Gamma_\rho} \frac{Q_{n,\textup{\textbf{m}}}(t)F_\alpha(t) \Phi'(t)}{\Phi^{k+1}(t)} dt, \quad \quad \rho\in (1,\rho_0(F_\alpha)).$$
Let $$Q_{|\textup{\textbf{m}}|}^{\textup{\textbf{F}}}(z):=\prod_{j=1}^{q}\left(1-\frac{z}{\lambda_j} \right)^{\tau_j},$$
where $\lambda_1,\lambda_2,\ldots,\lambda_q$ are distinct poles of $\textup{\textbf{F}}$ in $D_{\rho_{|\textup{\textbf{m}}|}(\textup{\textbf{F}})}$ and $\tau_1,\tau_2,\ldots,\tau_q$ are their multiplicities, respectively. Since $\textup{\textbf{F}}$ is polewise independent with respect to $\textup{\textbf{m}}$ in $D_{\rho_{|\textup{\textbf{m}}|}(\textup{\textbf{F}})},$ $\textup{\textbf{F}}$ has exactly $|\textup{\textbf{m}}|$ poles in $D_{\rho_{|\textup{\textbf{m}}|}(\textup{\textbf{F}})}$ and $\sum_{j=1}^{q}\tau_j=|\textup{\textbf{m}}|.$  Multiplying \eqref{banana1} by $Q_{|\textup{\textbf{m}}|}^{\textup{\textbf{F}}}$ and expanding $Q_{|\textup{\textbf{m}}|}^{\textup{\textbf{F}}}Q_{n,\textup{\textbf{m}}}F_\alpha-Q_{|\textup{\textbf{m}}|}^{\textup{\textbf{F}}}P_{n,\textup{\textbf{m}},\alpha}\in \mathcal{H}(D_{\rho_{|\textup{\textbf{m}}|}(\textup{\textbf{F}})})$ in terms of the Faber polynomial system $\{\Phi_{\nu}\}_{\nu=0}^{\infty}$, we obtain that for each $\alpha=1,2,\ldots,d$ and for $z\in D_{\rho_{|\textup{\textbf{m}}|}(\textup{\textbf{F}})},$
$$
Q_{|\textup{\textbf{m}}|}^{\textup{\textbf{F}}}(z)Q_{n,\textup{\textbf{m}}}(z)F_\alpha(z)-Q_{|\textup{\textbf{m}}|}^{\textup{\textbf{F}}}(z)P_{n,\textup{\textbf{m}},\alpha}(z)=\sum_{k=n+1}^{\infty} a_{k,n}^{(\alpha)} Q_{|\textup{\textbf{m}}|}^{\textup{\textbf{F}}}(z)\Phi_k(z)=\sum_{\nu=0}^{\infty} b_{\nu,n}^{(\alpha)} \Phi_\nu(z)
$$
\begin{equation}\label{use}
=\sum_{\nu=0}^{n+|\textup{\textbf{m}}|-m_\alpha} b_{\nu,n}^{(\alpha)} \Phi_\nu(z)+\sum_{\nu=n+|\textup{\textbf{m}}|-m_\alpha+1}^{\infty} b_{\nu,n}^{(\alpha)} \Phi_\nu(z).
\end{equation}
Note that the constants $b_{\nu,n}^{(\alpha)}$ can be calculated in the following forms:
\begin{equation*}\label{firstform}
b_{\nu,n}^{(\alpha)}:=\sum_{k=n+1}^{\infty} a_{k,n}^{(\alpha)} [ Q_{|\textup{\textbf{m}}|}^{\textup{\textbf{F}}} \Phi_k]_{\nu}, \quad \quad \nu=0,1,\ldots,n+|\textup{\textbf{m}}|-m_\alpha,
\end{equation*}
and 
\begin{equation}\label{secondform}
b_{\nu,n}^{(\alpha)}:=      [Q_{|\textup{\textbf{m}}|}^{\textup{\textbf{F}}} Q_{n,\textup{\textbf{m}}} F_\alpha]_{\nu}=\frac{1}{2\pi i}\int_{\Gamma_\rho} \frac{Q_{|\textup{\textbf{m}}|}^{\textup{\textbf{F}}}(t) Q_{n,\textup{\textbf{m}}}(t) F_\alpha(t) \Phi'(t)}{\Phi^{\nu+1}(t)} dt, \quad \quad  \nu \geq n+|\textup{\textbf{m}}|-m_\alpha+1,
\end{equation}
where $\rho\in (1,\rho_{|\textup{\textbf{m}}|}(\textup{\textbf{F}})).$
We want to show that for each $\alpha=1,2,\ldots,d,$ 
\begin{equation}\label{goal}
\limsup_{n \rightarrow \infty} \|\sum_{\nu=0}^{\infty} b_{\nu,n}^{(\alpha)} \Phi_\nu\|_K^{1/n}\leq \frac{\|\Phi\|_K}{\rho_{|\textup{\textbf{m}}|}(\textup{\textbf{F}})},
\end{equation}
for any compact set $K$ such that $E \subset K \subset D_{\rho_{|\textup{\textbf{m}}|}(\textup{\textbf{F}})}.$
Let $K$ be a fixed compact set such that $E \subset K \subset D_{\rho_{|\textup{\textbf{m}}|}(\textup{\textbf{F}})}.$ Let $\rho_1\in (1,\rho_{|\textup{\textbf{m}}|}(\textup{\textbf{F}}))$ be such that 
\begin{equation}\label{rho}
K\cup\{\lambda_1,\lambda_2,\ldots,\lambda_q\}\subset D_{\rho_1}.
\end{equation}
Choose $\delta>0$  sufficiently small so that 
\begin{equation}\label{delta}
 \|\Phi\|_K+\delta<\rho_1-\delta.
\end{equation}
 
We first prove that for each $\alpha=1,2,\ldots,d,$
\begin{equation}\label{simplepart}
\limsup_{n \rightarrow \infty} \|\sum_{\nu=n+|\textup{\textbf{m}}|-m_\alpha+1}^{\infty} b_{\nu,n}^{(\alpha)} \Phi_\nu\|_K^{1/n}\leq \frac{\|\Phi\|_K}{\rho_{|\textup{\textbf{m}}|}(\textup{\textbf{F}})}.
\end{equation}
Using the normalization of $Q_{n,\textup{\textbf{m}}}$ (the first inequality in \eqref{fromnormal}) and  \eqref{secondform} when $\rho=\rho_1$, there exists $n_0\in \mathbb{N}$ such that  for each $\alpha=1,2,\ldots,d,$
\begin{equation}\label{asymps}
|b_{\nu,n}^{(\alpha)}|\leq \frac{c_1}{(\rho_1-\delta)^{\nu}}, \quad \quad \nu\geq n_0
\end{equation}
where $c_1$ does not depend on $n$ (from now on, we will denote some constants that do not depend on $n$ by $c_2,c_3,\ldots$).
Using \eqref{polynomial}, we have
\begin{equation}\label{asymp}
 \|\Phi_{\nu}\|_{K}\leq c_2(\|\Phi\|_K+\delta)^{\nu}, \quad \quad \nu\geq 0.
\end{equation}
Therefore, by \eqref{asymps} and \eqref{asymp}, for $n \geq n_0,$
$$\|\sum_{\nu =n+|\textup{\textbf{m}}|-m_\alpha+1}^{\infty} b_{\nu,n}^{(\alpha)} \Phi_{\nu}\|_K \leq\sum_{\nu = n+|\textup{\textbf{m}}|-m_\alpha+1}^{\infty} |b_{\nu,n}^{(\alpha)}|  \|\Phi_{\nu}\|_K$$
$$\leq \sum_{\nu = n+|\textup{\textbf{m}}|-m_\alpha+1}^{\infty}c_3 \left( \frac{\|\Phi\|_K+\delta}{\rho_1-\delta}\right)^{\nu}\leq c_4  \left( \frac{\|\Phi\|_K+\delta}{\rho_1-\delta}\right)^{n}.$$ Then, for each $\alpha=1,2,\ldots,d,$
$$\limsup_{n \rightarrow \infty}\|\sum_{\nu =n+|\textup{\textbf{m}}|-m_\alpha+1}^{\infty} b_{\nu,n}^{(\alpha)} \Phi_{\nu}\|_K^{1/n}\leq \frac{\|\Phi\|_K +\delta}{\rho_1-\delta}.$$
Letting $\delta\rightarrow 0$ and $\rho_1\rightarrow \rho_{|\textup{\textbf{m}}|}(\textup{\textbf{F}}),$ we have \eqref{simplepart} as we wanted.

Secondly, we show that 
\begin{equation}\label{hardpart}
\limsup_{n \rightarrow \infty} \|\sum_{\nu=0}^{n+|\textup{\textbf{m}}|-m_\alpha} b_{\nu,n}^{(\alpha)} \Phi_\nu\|_K^{1/n}\leq \frac{\|\Phi\|_K}{\rho_{|\textup{\textbf{m}}|}(\textup{\textbf{F}})}. 
\end{equation}
 Recall that $b_{\nu,n}^{(\alpha)}=\sum_{k=n+1}^{\infty} a_{k,n}^{(\alpha)} [ Q_{|\textup{\textbf{m}}|}^{\textup{\textbf{F}}} \Phi_k]_{\nu}.$  Therefore, to approximate $b_{\nu,n}^{(\alpha)},$  we need to approximate $a_{k,n}^{(\alpha)}$ first. We will adapt the technique used in \cite{Sutinpade} to approximate $a_{k,n}^{(\alpha)}.$ Let $\rho_1\in (1,\rho_{|\textup{\textbf{m}}|}(\textup{\textbf{F}}))$ satisfying \eqref{rho} as before. Choose $\rho_2\in (1,\rho_0(\textup{\textbf{F}})).$  We have
$$a_{k,n}^{(\alpha)}=[Q_{n,\textup{\textbf{m}}} F_\alpha]_{k}=\frac{1}{2\pi i}\int_{\Gamma_{\rho_2}} \frac{Q_{n,\textup{\textbf{m}}}(t) F_\alpha(t) \Phi'(t)}{\Phi^{k+1}(t)} dt,\quad \quad \alpha=1,2,\ldots,d.$$ Define
\begin{equation}\label{define}
\gamma_{k,n}^{(\alpha)}:=\frac{1}{2\pi i}\int_{\Gamma_{\rho_1}} \frac{Q_{n,\textup{\textbf{m}}}(t) F_\alpha(t) \Phi'(t)}{\Phi^{k+1}(t)} dt, \quad \quad \alpha=1,2,\ldots,d.
\end{equation}
By our choices of $\rho_1$ and $\rho_2,$ for each $k\geq 0$ and for each $\alpha=1,2,\ldots,d,$ $Q_{n,\textup{\textbf{m}}} F_\alpha \Phi'/\Phi^{k+1}$ is meromorphic in $\overline{D_{\rho_1}}\setminus D_{\rho_2}=\{z\in \mathbb{C}: \rho_2 \leq |\Phi(z)|\leq \rho_1\}$ and has poles at $\lambda_1,\lambda_2,\ldots,\lambda_q$ with multiplicities at most $\tau_1,\tau_2,\ldots,\tau_q.$
Applying Cauchy's residue theorem, we obtain
\begin{equation}\label{residue}
\gamma_{k,n}^{(\alpha)}-a_{k,n}^{(\alpha)}=\sum_{j=1}^{q} \textup{res}(Q_{n,\textup{\textbf{m}}} F_\alpha \Phi'/\Phi^{k+1}, \lambda_j), \quad \quad \alpha=1,2,\ldots,d, \quad  \quad k\geq 0.
\end{equation}
The limit formula for the residue gives 
$$\textup{res}(Q_{n,\textup{\textbf{m}}} F_\alpha \Phi'/\Phi^{k+1}, \lambda_j)=\frac{1}{(\tau_j-1)!} \lim_{z \rightarrow \lambda_j} \left(\frac{(z-\lambda_j)^{\tau_j} Q_{n,\textup{\textbf{m}}}(z) F_\alpha(z) \Phi'(z)}{\Phi^{k+1}(z)}  \right)^{(\tau_j-1)}.$$ Leibniz's formula allows us to write
$$\left(\frac{(z-\lambda_j)^{\tau_j} Q_{n,\textup{\textbf{m}}}(z) F_\alpha(z) \Phi'(z)}{\Phi^{k+1}(z)}  \right)^{(\tau_j-1)}$$
$$=\sum_{t=0}^{\tau_j-1}{\tau_j-1 \choose t} \left(\frac{Q_{n,\textup{\textbf{m}}}(z) \Phi'(z)}{\Phi^{n+1}(z)}  \right)^{(\tau_j-1-t)} \left( (z-\lambda_j)^{\tau_j}F_\alpha(z) \Phi^{n-k}(z) \right)^{(t)}.$$ For $j=1,2,\ldots,q,$ and $t=0,1,\ldots,\tau_j-1,$ set 
$$\beta_n(j,t):=\frac{1}{(\tau_j-1)!} {\tau_j-1 \choose t} \lim_{z \rightarrow \lambda_j}  \left(\frac{Q_{n,\textup{\textbf{m}}}(z) \Phi'(z)}{\Phi^{n+1}(z)}  \right)^{(\tau_j-1-t)}$$
(notice that the $\beta_n(j,t)$ do not depend on $k$ and $\alpha$). So, we rewrite \eqref{residue} as 
\begin{equation}\label{main}
\gamma_{k,n}^{(\alpha)}-a_{k,n}^{(\alpha)}=\sum_{j=1}^{q} \sum_{t=0}^{\tau_j-1} \beta_n(j,t)  \left( (z-\lambda_j)^{\tau_j}F_\alpha(z) \Phi^{n-k}(z) \right)^{(t)}_{z=\lambda_j}, \quad  \alpha=1,2,\ldots,d,   \quad k\geq 0.
\end{equation} By the definition of simultaneous Pad\'{e}-Faber approximants, $$a_{k,n}^{(\alpha)}=0,\quad \quad \alpha=1,2,\ldots,d,\quad\quad  k=n-m_\alpha+1,n-m_\alpha+2,\ldots,n,$$ which implies 
\begin{equation}\label{realmain}
\gamma_{k,n}^{(\alpha)}=\sum_{j=1}^{q} \sum_{t=0}^{\tau_j-1} \beta_n(j,t)  \left( (z-\lambda_j)^{\tau_j}F_\alpha(z) \Phi^{n-k}(z) \right)^{(t)}_{z=\lambda_j}
\end{equation}
for all $\alpha=1,2,\ldots,d$ and $k=n-m_\alpha+1,n-m_\alpha+2,\ldots,n.$ 
We view \eqref{realmain} as a system of $|\textup{\textbf{m}}|$ equations on the $|\textup{\textbf{m}}|$ unknowns $\beta_n(j,t)$ and the determinant corresponding this system is 
\begin{eqnarray}\label{matrixL}
\Delta:=\begin{vmatrix}\notag
 \left[ (z-\lambda_j)^{\tau_j}F_\alpha(z) \Phi^{m_\alpha-1}(z)  \right]_{z=\lambda_j} &     \cdots &  \left[ (z-\lambda_j)^{\tau_j}F_\alpha(z) \Phi^{m_\alpha-1}(z)  \right]^{(\tau_j-1)}_{z=\lambda_j} \\
  \left[ (z-\lambda_j)^{\tau_j}F_\alpha(z) \Phi^{m_\alpha-2}(z)  \right]_{z=\lambda_j}  &  \cdots & \left[ (z-\lambda_j)^{\tau_j}F_\alpha(z) \Phi^{m_\alpha-2}(z)  \right]^{(\tau_j-1)}_{z=\lambda_j}  \\
   \vdots   & \vdots     &  \vdots  \\
  \left[ (z-\lambda_j)^{\tau_j}F_\alpha(z)   \right]_{z=\lambda_j} &  \cdots &    \left[ (z-\lambda_j)^{\tau_j}F_\alpha(z)   \right]^{(\tau_j-1)}_{z=\lambda_j} \\
 \end{vmatrix}_{j=1,\ldots,q,\,\,\alpha=1,\ldots,d},
 \end{eqnarray}
where the subindex on the determinant means that the indicated group of columns are successively written for $j=1,2,\ldots, q$ and the rows repeated for $\alpha=1,2,\ldots,d.$

 If $\Delta=0,$ then there exists a linear combination of rows giving the zero vector. This means that there exist polynomials $v_1(z),v_2(z),\ldots,v_d(z),$ such that $\deg v_\alpha\leq m_{\alpha}-1$ and 
$$  \sum_{\alpha=1}^{d}[ (z-\lambda_j)^{\tau_j} v_\alpha(\Phi(z)) F_\alpha(z)]_{z=\lambda_j}^{(l)}=0,\quad \quad j=1,2,\ldots, q,\quad \quad l=0,1,\ldots, \tau_j-1.$$
Equivalently, $\sum_{\alpha=1}^{d}v_\alpha(\Phi(z)) F_\alpha(z)\in \mathcal{H}(D_{\rho_{|\textup{\textbf{m}}|} (\textup{\textbf{F}})}\setminus E).$ This is impossible because $\textup{\textbf{F}}$ is polewise independent with respect to $\textup{\textbf{m}}$ in $D_{\rho_{|\textup{\textbf{m}}|}(\textup{\textbf{F}})}.$ Therefore, $\Delta\not=0$ and $|\Delta| \geq c_5>0.$

To avoid long expressions, let us define: for all $w=1,2,\ldots,d,$ $y=1,2,\ldots,m_w,$ $j=1,2,\ldots,q,$ and $t=0,1,\ldots,\tau_j-1,$
$$g_{w,y}:=(\sum_{r=0}^{w-1} m_r)+y\quad \quad \textup{and}\quad \quad h_{j,t}:=(\sum_{l=0}^{j-1} \tau_l)+t+1,$$
where $m_0=\tau_0=0.$
Applying Cramer's rule to \eqref{realmain}, we have
\begin{equation}\label{betai}
\beta_n(j,t)=\frac{\Delta_n(j,t)}{\Delta}=\frac{1}{\Delta}\sum_{w=1}^{d}\sum_{y=1}^{m_w} \gamma_{n-m_w+y,n}^{(w)}C[g_{w,y}, h_{j,t}],
\end{equation}
where $\Delta_n(j,t)$ is the determinant obtained from $\Delta$ by replacing the $h_{j,t}^{\textup{th}}$ column with the column $$[\gamma_{n-m_w+1,n}^{(w)} \quad \gamma_{n-m_w+2,n}^{(w)} \quad  \ldots \quad \gamma_{n,n}^{(w)}]_{w=1,2,\ldots,d}^{T}$$ and $C[g,h]$ is the determinant of the $(g,h)^{\textup{th}}$ cofactor matrix of $\Delta_n(j,t).$ Substituting $\beta_n(j,t)$ in \eqref{main}  with the expression in \eqref{betai}, we obtain for $\alpha=1,2,\ldots,d$ and $k\geq n+1,$
\begin{equation}\label{yhb}\gamma_{k,n}^{(\alpha)}-a_{k,n}^{(\alpha)}=\frac{1}{\Delta}\sum_{j=1}^{q} \sum_{t=0}^{\tau_j-1} \sum_{w=1}^{d}\sum_{y=1}^{m_w} \gamma_{n-m_w+y,n}^{(w)}C[g_{w,y}, h_{j,t}]  \left( (z-\lambda_j)^{\tau_j}F_\alpha(z) \Phi^{n-k}(z) \right)^{(t)}_{z=\lambda_j}.
\end{equation}

  Define
  $$\mathbb{B}(\lambda,r):=\{z\in \mathbb{C}: |z-\lambda|<r\}.$$
  Let $\varepsilon>0$ be sufficiently small so that $\{z\in \mathbb{C}: |z-\lambda_j|=\varepsilon\}\subset \{z\in \mathbb{C}:|\Phi(z)|>\rho_2\}$ for all $j=1,2,\ldots,q$ and $\overline{\mathbb{B}(\lambda_j,\varepsilon)}\cap \overline{\mathbb{B}(\lambda_\alpha,\varepsilon)}=\emptyset$ for all $\alpha\not=j.$ Using Cauchy's integral formula,
\begin{equation}\label{cauchy}
\left( (z-\lambda_j)^{\tau_j}F_\alpha(z) \Phi^{n-k}(z) \right)^{(l)}_{z=\lambda_j}=\frac{l!}{2\pi i}\int_{|z-\lambda_j|=\varepsilon}\frac{(z-\lambda_j)^{\tau_j}F_\alpha(z) \Phi^{n-k}(z)dz}{(z-\lambda_j)^{l+1}}.
\end{equation}
We can easily check that there exists a constant $c_6$ such that such that for all  $j=1,2,\ldots,q,$  $l=0,1,\ldots,\tau_j-1,$ $\alpha=1,2,\ldots,d,$ and $k\geq n+1,$
\begin{equation}\label{new1}
\left|\left( (z-\lambda_j)^{\tau_j}F_\alpha(z) \Phi^{n-k}(z) \right)^{(l)}_{z=\lambda_j} \right|\leq  \frac{c_6}{\rho_2^{k-n}}, 
\end{equation} 
for sufficiently large $n.$ Similarly, there exists a constant $c_7$ such that for all  $j=1,2,\ldots,q,$ $l=0,1,\ldots,\tau_j-1,$ $\alpha=1,2,\ldots,d,$ and $k=n-m_\alpha+1,n-m_\alpha+2,\ldots,n,$
\begin{equation}\label{new2}
\left|\left( (z-\lambda_j)^{\tau_j}F_\alpha(z) \Phi^{n-k}(z) \right)^{(l)}_{z=\lambda_j} \right|\leq  c_7,
\end{equation} 
for sufficiently large $n.$
From \eqref{new2}, 
\begin{equation}\label{new4} 
|C(g,h)|\leq c_{8},\quad \quad g,h=1,2,\ldots,|\textup{\textbf{m}}|.
\end{equation}
Using \eqref{new1},  \eqref{new4}, and $\Delta\geq c_5>0,$ it follows from \eqref{yhb} that 
\begin{equation}\label{wow}
|a_{k,n}^{(\alpha)}|\leq |\gamma_{k,n}^{(\alpha)}|+\frac{c_{9}}{\rho_2^{k-n}} \sum_{w=1}^{d}\sum_{y=1}^{m_w} |\gamma_{n-m_w+y,n}^{(w)}|,\quad \quad \alpha=1,2,\ldots,d,\quad \quad  k\geq n+1.
\end{equation}
By the definition of $\gamma_{k,n}^{(\alpha)}$ (see \eqref{define}), for all sufficiently large $n,$ $$|\gamma_{k,n}^{(\alpha)}|\leq \frac{c_{10}}{(\rho_1-\delta)^k}, \quad \quad \alpha=1,2,\ldots,d, \quad \quad  k \geq n-|\textup{\textbf{m}}|+1,
$$ where $\delta$ is sufficiently small so that $\delta$ satisfies \eqref{delta} and $\rho_2<\rho_1-\delta.$ This and the equality \eqref{wow} imply
\begin{equation}\label{aapr}
|a_{k,n}^{(\alpha)}| \leq \frac{c_{11}}{\rho_2^{k-n} (\rho_1-\delta)^n}, \quad \quad \alpha=1,2,\ldots,d, \quad \quad k\geq n+1.
\end{equation}
Moreover, we have for all $\nu\geq 0$ and $k\geq n+1,$ 
 \begin{equation}\label{yar}
 |[ Q_{|\textup{\textbf{m}}|}^{\textup{\textbf{F}}} \Phi_k]_{\nu}|=\left|\frac{1}{2\pi i}\int_{\Gamma_{\rho_2-2\delta}} \frac{Q_{|\textup{\textbf{m}}|}^{\textup{\textbf{F}}}(t) \Phi_k(t) \Phi'(t)}{\Phi^{\nu+1}(t)} dt \right|\leq c_{12} \frac{(\rho_2-\delta)^k}{(\rho_2-3\delta)^{\nu}},
 \end{equation}
 where $\delta$ is sufficiently small so that $\rho_2-3\delta>1.$
Combining \eqref{aapr} and \eqref{yar}, we have for all  $\alpha=1,2,\ldots,d,$
$$|b_{\nu,n}^{(\alpha)}|=\left|\sum_{k=n+1}^{\infty} a_{k,n}^{(\alpha)} [ Q_{|\textup{\textbf{m}}|}^{\textup{\textbf{F}}} \Phi_k]_{\nu}\right|\leq \sum_{k=n+1}^{\infty} |a_{k,n}^{(\alpha)}| |[ Q_{|\textup{\textbf{m}}|}^{\textup{\textbf{F}}} \Phi_k]_{\nu}|$$
$$\leq \frac{c_{13}}{(\rho_2-3\delta)^{\nu} } \left(\frac{\rho_2}{\rho_1-\delta}\right)^n \sum_{k=n+1}^{\infty} \left(\frac{\rho_2-\delta}{\rho_2}\right)^{k} \leq \frac{c_{14 }}{(\rho_2-3\delta)^{\nu} } \left(\frac{\rho_2}{\rho_1-\delta}\right)^n \left(\frac{\rho_2-\delta}{\rho_2}\right)^{n}$$
$$=\frac{c_{14}}{(\rho_2-3\delta)^{\nu} } \left(\frac{\rho_2-\delta}{\rho_1-\delta}\right)^n$$

Now, we show \eqref{hardpart}.
 Recall that our choices of $\rho_1$ and $\delta$ (see \eqref{rho} and \eqref{delta}, respectively) give   $ \|\Phi\|_K+\delta<\rho_1-\delta.$  Moreover, 
$\|\Phi_{\nu}\|_{K}\leq c_2(\|\Phi\|_K+\delta)^{\nu},$ for all  $\nu\geq 0.$
Therefore, for each $\alpha=1,2,\ldots,d,$
$$\|\sum_{\nu=0}^{n+|\textup{\textbf{m}}|-m_\alpha} b_{\nu,n}^{(\alpha)} \Phi_\nu\|_K\leq \sum_{\nu=0}^{n+|\textup{\textbf{m}}|-m_\alpha}|b_{\nu,n}^{(\alpha)}|\|\Phi_\nu\|_K\leq c_{15} \left(\frac{\rho_2-\delta}{\rho_1-\delta}\right)^n \sum_{\nu=0}^{n+|\textup{\textbf{m}}|-m_\alpha} \left(\frac{\|\Phi\|_K+\delta}{\rho_2-3\delta}\right)^{\nu}$$
$$\leq c_{15}(n+|\textup{\textbf{m}}|-m_\alpha+1)\left(\frac{\rho_2-\delta}{\rho_1-\delta}\right)^n\left(\frac{\|\Phi\|_K+\delta}{\rho_2-3\delta}\right)^{n+|\textup{\textbf{m}}|-m_\alpha}.$$
Hence, for each $\alpha=1,2,\ldots,d,$
$$\limsup_{n \rightarrow \infty}\|\sum_{\nu=0}^{n+|\textup{\textbf{m}}|-m_\alpha} b_{\nu,n}^{(\alpha)} \Phi_\nu\|_K^{1/n}\leq \left(\frac{\|\Phi\|_K+\delta}{\rho_1-\delta}\right)  \left(\frac{\rho_2-\delta}{\rho_2-3\delta} \right).$$
Letting $\delta\rightarrow 0$ and $\rho_1\rightarrow \rho_{|\textup{\textbf{m}}|}(\textup{\textbf{F}}),$ for each $\alpha=1,2,\ldots,d,$ 
\begin{equation}\label{finishthedifficultone}
\limsup_{n \rightarrow \infty}\|\sum_{\nu=0}^{n+|\textup{\textbf{m}}|-m_\alpha} b_{\nu,n}^{(\alpha)} \Phi_\nu\|_K^{1/n}\leq \frac{\|\Phi\|_{K}}{\rho_{|\textup{\textbf{m}}|}(\textup{\textbf{F}})}.
\end{equation}
Combining \eqref{simplepart} and \eqref{finishthedifficultone}, we have \eqref{goal}. Therefore, from \eqref{use}, we obtain
\begin{equation}\label{39}
\limsup_{n \rightarrow \infty} \| Q_{|\textup{\textbf{m}}|}^{\textup{\textbf{F}}} Q_{n,\textup{\textbf{m}}}  F_\alpha-Q_{|\textup{\textbf{m}}|}^{\textup{\textbf{F}}}  P_{n,\textup{\textbf{m}},\alpha}\|_K^{1/n}\leq \frac{\|\Phi\|_K}{\rho_{|\textup{\textbf{m}}|}(\textup{\textbf{F}})},\quad \quad \alpha=1,2,\ldots,d,
\end{equation} 
where $K$ is any compact set such that $E\subset K \subset D_{\rho_{|\textup{\textbf{m}}|}(\textup{\textbf{F}})}.$ To show that \eqref{39} is true for any compact subset $K$ of $D_{\rho_{|\textup{\textbf{m}}|}(\textup{\textbf{F}})},$ we let $K$ be any compact subset of $D_{\rho_{|\textup{\textbf{m}}|}(\textup{\textbf{F}})}.$ If $K\subset E,$ then clearly $\|\Phi\|_K$ on the right of \eqref{39} can be replaced by $1.$ If $K\cap (D_{\rho_{|\textup{\textbf{m}}|}(\textup{\textbf{F}})}\setminus E )\not=\emptyset,$ then for any $ \alpha=1,2,\ldots,d,$
$$\limsup_{n \rightarrow \infty} \| Q_{|\textup{\textbf{m}}|}^{\textup{\textbf{F}}} Q_{n,\textup{\textbf{m}}}  F_\alpha-Q_{|\textup{\textbf{m}}|}^{\textup{\textbf{F}}}  P_{n,\textup{\textbf{m}},\alpha}\|_K^{1/n}$$
$$
\leq \limsup_{n \rightarrow \infty} \| Q_{|\textup{\textbf{m}}|}^{\textup{\textbf{F}}} Q_{n,\textup{\textbf{m}}}  F_\alpha-Q_{|\textup{\textbf{m}}|}^{\textup{\textbf{F}}}  P_{n,\textup{\textbf{m}},\alpha}\|_{K\cup E}^{1/n}\leq \frac{\|\Phi\|_{K \cup E}}{\rho_{|\textup{\textbf{m}}|}(\textup{\textbf{F}})}=\frac{\|\Phi\|_{K }}{\rho_{|\textup{\textbf{m}}|}(\textup{\textbf{F}})}.$$ Therefore, \eqref{39} is true for any compact set $K\subset D_{\rho_{|\textup{\textbf{m}}|}(\textup{\textbf{F}})}.$

Let $\varepsilon>0.$ From the second inequality of \eqref{fromnormal}, we obtain
$$\|F_{\alpha}-R_{n,\textup{\textbf{m}},\alpha}\|_{K(\varepsilon)}\leq c_{16}n^{2|\textup{\textbf{m}}|} \|Q_{|\textup{\textbf{m}}|}^{\textup{\textbf{F}}}Q_{n,\textup{\textbf{m}}} F_\alpha-Q_{|\textup{\textbf{m}}|}^{\textup{\textbf{F}}} P_{n,\textup{\textbf{m}},\alpha}\|_K \quad \quad \alpha=1,2,\ldots,d.$$ Using \eqref{39}, we have 
\begin{equation}\label{deleteK}
\limsup_{n \rightarrow \infty} \|  F_\alpha-R_{n,\textup{\textbf{m}},\alpha}\|_{K(\varepsilon)}^{1/n}\leq \frac{\|\Phi\|_K}{\rho_{|\textup{\textbf{m}}|}(\textup{\textbf{F}})},\quad \quad \alpha=1,2,\ldots,d,
\end{equation}
for any compact subset $K$ of $D_{\rho_{|\textup{\textbf{m}}|}(\textup{\textbf{F}})}.$ 
 This implies that for each $\alpha=1,2,\ldots,d,$ 
$$\textup{$h$-$\lim_{n \rightarrow \infty}R_{n,\textup{\textbf{m}},\alpha}=F_\alpha$}$$ 
in $D_{\rho_{|\textup{\textbf{m}}|}(\textup{\textbf{F}})}.$ By Lemma \ref{goncharlemma}, each pole of $\textup{\textbf{F}}$ attracts zeros of $Q_{n,\textup{\textbf{m}}}$ according to its multiplicity. Since $\deg Q_{n,\textup{\textbf{m}}}\leq |\textup{\textbf{m}}|,$ $\deg Q_{n,\textup{\textbf{m}}}=|\textup{\textbf{m}}|$ for sufficiently large $n.$ For such $n$'s, $R_{n,\textup{\textbf{m}}}$ is unique. In fact, if this was not the case we could find an infinite subsequence of indices for which Definition \ref{simuf} has solutions with $\deg Q_{n,\textup{\textbf{m}}}<|\textup{\textbf{m}}|,$ which contradicts what was proved. In what follows, we consider only such $n$'s. Moreover, this means that for sufficiently large $n,$ $$Q_{n,\textup{\textbf{m}}}(z)=\prod_{k=1}^{|\textup{\textbf{m}}|}\left(1-\frac{z}{\lambda_{n,k}} \right),$$
and $$\lim_{n \rightarrow \infty} Q_{n,\textup{\textbf{m}}}(z)=Q_{|\textup{\textbf{m}}|}^{\textup{\textbf{F}}}(z).$$ Because the set of the limit points of zeros of $Q_{n,\textup{\textbf{m}}}$ is $\mathcal{P}_{|\textup{\textbf{m}}|}(\textup{\textbf{F}}),$ the inequality \eqref{deleteK} implies \eqref{2.4}.

Finally, we prove \eqref{2.5}. We first need to show that for $j=1,2,\ldots,q,$
\begin{equation}\label{3.31}
\limsup_{n \rightarrow \infty} |(Q_{n,\textup{\textbf{m}}})^{(k)}(\lambda_j)|^{1/n}\leq \frac{|\Phi(\lambda_j)|}{\rho_{|\textup{\textbf{m}}|}(\textup{\textbf{F}})}, \quad \quad k=0,1,\ldots,\tau_j-1
\end{equation}  
by induction on $k.$
Let $\varepsilon>0$ be sufficiently small so that $\overline{\mathbb{B}(\lambda_j,\varepsilon)}\subset D_{\rho_{|\textup{\textbf{m}}|} (\textup{\textbf{F}})}$ for all $j=1,2,\ldots,q$ and the disks $\overline{\mathbb{B}(\lambda_j,\varepsilon)},$ $j=1,2,\ldots,q,$ are pairwise disjoint. Let $j\in\{1,2,\ldots,q\}.$ There exists $\alpha:=\alpha(j)\in \{1,2,\ldots,d\}$ such that $\lambda_j$ is a pole of $F_{\alpha}$ of order $\tau_j.$  As a consequence of \eqref{39}, we have 
\begin{equation}\label{aaa}
\limsup_{n \rightarrow \infty} \|(z-\lambda_j)^{\tau_j} F_\alpha Q_{n,\textup{\textbf{m}}}-(z-\lambda_j)^{\tau_j} P_{n,\textup{\textbf{m}},\alpha}\|_{\overline{\mathbb{B}(\lambda_j,\varepsilon)}}^{1/n}\leq \frac{\|\Phi\|_{\overline{\mathbb{B}(\lambda_j,\varepsilon)}}}{\rho_{|\textup{\textbf{m}}|}(\textup{\textbf{F}})},
\end{equation} so by Cauchy's integral formula for the derivative, we obtain
\begin{equation}\label{3.32}
\limsup_{n \rightarrow \infty} \|[(z-\lambda_j)^{\tau_j} F_\alpha Q_{n,\textup{\textbf{m}}}-(z-\lambda_j)^{\tau_j} P_{n,\textup{\textbf{m}},\alpha}]^{(k)}\|_{\overline{\mathbb{B}(\lambda_j,\varepsilon)}}^{1/n}\leq \frac{\|\Phi\|_{\overline{\mathbb{B}(\lambda_j,\varepsilon)}}}{\rho_{|\textup{\textbf{m}}|}(\textup{\textbf{F}})},
\end{equation}
for all $k\geq 0.$ Letting $\varepsilon \rightarrow 0^{+},$ the inequality \eqref{aaa} implies that 
$$\limsup_{n \rightarrow \infty} |L_{j} Q_{n,\textup{\textbf{m}}} (\lambda_j)|^{1/n }\leq \frac{|\Phi(\lambda_j)|}{\rho_{|\textup{\textbf{m}}|}(\textup{\textbf{F}})},$$
where $L_{j}:=\lim_{z\rightarrow \lambda_j}(z-\lambda_j)^{\tau_j}F_\alpha(z)\not=0$ (because $F_\alpha$ has a pole of order $\tau_j$ at $\lambda_j$).  Therefore, 
$$\limsup_{n \rightarrow \infty} |Q_{n,\textup{\textbf{m}}}(\lambda_j)|^{1/n}\leq \frac{|\Phi(\lambda_j)|}{\rho_{|\textup{\textbf{m}}|}(\textup{\textbf{F}})},$$
which is the base case. Now, let $r\leq \tau_j-1$ and assume that 
\begin{equation}\label{3.33}
\limsup_{n \rightarrow \infty} |(Q_{n,\textup{\textbf{m}}})^{(k)}(\lambda_j)|\leq \frac{|\Phi(\lambda_j)|}{\rho_{|\textup{\textbf{m}}|}(\textup{\textbf{F}})}, \quad \quad k=0,1,\ldots,r-1.
\end{equation}
Let us show that the above inequality also holds for $k=r.$ Using \eqref{3.32}, since $r<\tau_j,$ we obtain 
\begin{equation}\label{3.34}
\limsup_{n \rightarrow \infty} \left| [(z-\lambda_j)^{\tau_j} F_\alpha Q_{n,\textup{\textbf{m}}}]^{(r)}(\lambda_j) \right|^{1/n} \leq \frac{|\Phi(\lambda_j)|}{\rho_{|\textup{\textbf{m}}|}(\textup{\textbf{F}})}.
\end{equation}
By the Leibniz formula, we have 
$$\left[(z-\lambda_j)^{\tau_j} F_\alpha Q_{n,\textup{\textbf{m}}}\right]^{(r)}(\lambda_j)=\sum_{l=0}^{r} {r \choose l} \left[(z-\lambda_j)^{\tau_j} F_\alpha \right]^{(l)}(\lambda_j) (Q_{n,\textup{\textbf{m}}})^{(r-l)}(\lambda_j).$$
Therefore, by \eqref{3.33}, \eqref{3.34}, and the fact that $L_{j}\not=0,$ it follows that 
$$\lim_{n \rightarrow \infty} \left|(Q_{n,\textup{\textbf{m}}})^{(r)}(\lambda_j) \right|^{1/n}\leq \frac{|\Phi(\lambda_j)|}{\rho_{|\textup{\textbf{m}}|}(\textup{\textbf{F}})},$$
which completes the induction and the proof of \eqref{3.31}.

Using Hermite interpolation, it is easy to construct a basis $\{e_{j,t}\}_{j=1,2,\ldots,q,\, t=0,1,\ldots,\tau_j-1}$ in the space of polynomials of degree at most $|\textup{\textbf{m}}|-1$ satisfying
$$e_{j,t}^{(k)}(\lambda_\ell)=\delta_{\ell,j}\delta_{k,t},\quad \quad 1\leq \ell \leq q, \quad \quad 0\leq k \leq \tau_\ell-1.$$ Then,
$$Q_{n,\textup{\textbf{m}}}(z)=\sum_{j=1}^{q} \sum_{t=0}^{\tau_j-1} (Q_{n,\textup{\textbf{m}}})^{(t)}(\lambda_j)e_{j,t}(z)+C_n Q_{|\textup{\textbf{m}}|}^{\textup{\textbf{F}}}(z),$$
where $C_n=\prod_{j=1}^{q} \lambda_j^{\tau_j}/\prod_{k=1}^{|\textup{\textbf{m}}|}\lambda_{n,k}.$ Using \eqref{3.31}, we have 
$$\limsup_{n \rightarrow \infty} \|Q_{n,\textup{\textbf{m}}}-C_{n} Q_{|\textup{\textbf{m}}|}^{{\textup{\textbf{F}}}}\|^{1/n}\leq \frac{\max_{\lambda\in \mathcal{P}_{|\textup{\textbf{m}}|}(\textup{\textbf{F}})}|\Phi(\lambda)|}{\rho_{|\textup{\textbf{m}}|}(\textup{\textbf{F}})}.$$ Evaluating at zero, we obtain 
$$\limsup_{n \rightarrow \infty}|1-C_n|^{1/n}\leq \frac{\max_{\lambda\in \mathcal{P}_{|\textup{\textbf{m}}|}(\textup{\textbf{F}})}|\Phi(\lambda)|}{\rho_{|\textup{\textbf{m}}|}(\textup{\textbf{F}})}.$$ This implies \eqref{2.5} which completes the proof.

\end{proof}

\section{Acknowledgements}

 \quad\quad   I wish to express my gratitude toward the anonymous referee and the editor for
helpful comments and suggestions leading to improvements of this work. I also
want to thank Prof. Guillermo L\'opez Lagomasino and  Assoc. Prof. Chontita Rattanakul for insight on the topic of this paper  and their suggestions.

\end{document}